\newtheorem{theorem}{\bf Theorem}[section]
\newtheorem{lemma}[theorem]{\bf Lemma}
\newtheorem{corollary}[theorem]{\bf Corollary}
\newtheorem{example}[theorem]{\bf Example}
\newcommand{\N}{\mathbb{N}}
\newcommand{\R}{\mathbb{R}}
\title{A family of four-variable expanders with quadratic growth }
\author[M. Makhul]{
	Mehdi Makhul$^{\ast}$}
\address[Mehdi Makhul]{Johann Radon Institute for Computational and Applied 
	Mathematics (RICAM), Austrian Academy of Sciences, Linz and Research Institute 
	for Symbolic Computation (RISC), Johannes Kepler University, Linz}
\email{mmakhul@risc.jku.at}
\thanks{$^\ast$ Supported by the Austrian Science Fund (FWF): W1214-N15, 
	Project DK9} 
\begin{document}

\begin{abstract}
	We prove that if $g(x,y)$ is a polynomial of constant degree $d$ that $y_2-y_1$ does not divide $g(x_1,y_1)-g(x_2,y_2)$, then for any finite set $A \subset  \R$ 
	\[ |X| \gg_d |A|^2, \quad \text{where} \
	X:=\left\{\frac{g(a_1,b_1)-g(a_2,b_2)}{b_2-b_1} :\, a_1,a_2,b_1,b_2 \in A \right\}.
	\]
	We will see this bound is also tight for some polynomial $g(x,y)$. 
\end{abstract}	

\maketitle

\section{Introduction}
Throughout this paper, when we write $X \gg Y$, this means that $X \ge cY$, for some absolute constant $c > 0$.

The sum set of a subset $A \subset \mathbb{R}$ is defined as $A+A:=\{a+b: a,b \in A \}$. The product set is defined in a similar way, $AA:=\{ab : a,b \in A \}$.

The Erd\H{o}s-Szemer\'edi \cite{Erdoes1983} conjecture states that, for all $\epsilon > 0$ and for any finite set $A \subset \N$,
\[
\text{max}\{|A+A|,|AA|\} \geq c(\epsilon) |A|^{2-\epsilon}.
\]
 It is natural to extend this conjecture for other settings (such as $\mathbb{R}$), and also to change the polynomials $F(x,y)=x+y$ and $F(x,y)=xy$ defining the sum and product sets to other polynomials or rational functions. In recent years much research has been done in this direction.

For many such functions, the images of sets are known to always grow. For example, the authors of \cite{Murphy2015} have studied several multivariable polynomials, including the function
\[
 G(x_1,x_2,x_3,x_4,x_5)=x_1(x_2+x_3+x_4+x_5).
\]
More precisely they showed that, for any finite set $A \subset \R$ ,
\[
|A(A+A+A+A)|\gg \frac{|A|^2}{\log|A|},
\]
where $A(A+A+A+A):=\{x_1(x_2+x_3+x_4+x_5) : x_i \in A \}$.

In \cite{Murphy2017}, the authors studied a more complicated function, namely
\[
H(x_1,x_2,x_3,x_4,x_5)=(x_1+x_2+x_3+x_4)^2+ \log x_5.
\]
They showed that, for any finite $A \subset \mathbb R$,
\[
|\left\{(a_1+a_2+a_3+a_4)^2+\log a_5 : a_i \in A \right\}| \gg \frac{|A|^2}{\log|A|}.
\]
In the same circle of ideas, \cite{Balog2015} investigated the rational function $F(x_1,x_2,x_3,x_4)=\frac{x_1+x_2}{x_3+x_4}$, showing that for any finite set $A \subset \R$, we have 
\[
|F(A,A,A,A)|\ge 2|A|^2-1.
\]
Our result is a generalization of the method of \cite[Corollary 3.1]{Murphy2015}, where they used the Szemer\'{e}di-Trotter Theorem to prove that for any finite set $A \subset \R$:
\[
 \left \lvert \frac{A-A}{A-A} \right \lvert  \gg |A|^2.
\]
A stronger version of this result, with a multiplicative constant $1$, follows from an earlier geometric result of Ungar \cite{Ungar1982}.

In this article we consider a certain class of rational functions of four variables. Suppose that $g(x,y)$ is a polynomial of two variables of degree $d$. Let 
 $$F(x_1,x_2,y_1,y_2)=\frac{g(x_1,y_1)-g(x_2,y_2)}{y_2 - y_1}$$
 be a four-variable rational function in terms of $x_1,x_2,y_1,y_2$. The main theorem of this paper is the following result concerning the growth of $F$.

\begin{theorem}{\label{main}}
	Suppose that $g(x,y)$ is a polynomial of degree $d$, that $y_2-y_1$ does not divide $g(x_1,y_1)-g(x_2,y_2)$, and that $A \subset  \R$ is a finite set. Then
	\[ |X| \gg_d |A|^2, \quad \text{where} \
	X:=\left\{\frac{g(a_1,b_1)-g(a_2,b_2)}{b_2-b_1} :\, a_1,a_2,b_1,b_2 \in A \right\}.
	\]
\end{theorem}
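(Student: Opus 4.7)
The plan is to realise $X$ as the set of non-vertical slopes determined by a planar point set built from the graph of $g$, and then to invoke a classical slope-counting bound. Specifically, set
$$P := \{(b, g(a,b)) : a,b \in A\} \subset \R^2.$$
For any two distinct points $p_1 = (b_1, g(a_1,b_1))$ and $p_2 = (b_2, g(a_2,b_2))$ of $P$ with $b_1 \ne b_2$, the line through $p_1$ and $p_2$ has slope $-\frac{g(a_1,b_1) - g(a_2,b_2)}{b_2 - b_1}$, so the set of non-vertical directions determined by pairs of points of $P$ is exactly $-X$. It therefore suffices to show that $P$ determines $\gg_d |A|^2$ distinct non-vertical directions.

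I would first establish the lower bound $|P| \gg_d |A|^2$. Writing $g(x,y) = \sum_{i=0}^{d} c_i(y) x^i$, the divisibility hypothesis rules out $g$ being a function of $y$ alone (if $g$ depended only on $y$, then $y_1 - y_2$ would automatically divide $g(x_1,y_1) - g(x_2,y_2)$), so some $c_i$ with $i \ge 1$ is a non-zero polynomial of degree at most $d$. For each $b \in A$ that is not a root of this $c_i$, the polynomial $g(x,b) \in \R[x]$ is non-constant of degree at most $d$, whence $|g(A,b)| \ge |A|/d$. At most $d$ elements of $A$ are ``bad'', so summing over the $\ge |A| - d$ good values of $b$ gives $|P| \gg_d |A|^2$ for $|A|$ sufficiently large in $d$.

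Next, I would verify that $P$ is not collinear. If $P \subset \{y = \alpha x + \beta\}$, then the polynomial $g(x,y) - \alpha y - \beta$ vanishes identically on the grid $A \times A$, and a standard interpolation argument (using $|A| > d$) forces $g(x,y) = \alpha y + \beta$. But this is a function of $y$ alone, so $y_2 - y_1$ divides $g(x_1,y_1) - g(x_2,y_2) = \alpha(y_1 - y_2)$, contradicting the hypothesis.

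Finally, Ungar's theorem guarantees that a non-collinear set of $n$ points in the plane determines at least $2\lfloor n/2 \rfloor$ distinct directions. Applied to $P$, and subtracting the (at most one) vertical direction, this yields $|X| \gg_d |A|^2$. The main obstacle is the collinearity check: the divisibility hypothesis must be leveraged twice, once to obtain the lower bound on $|P|$ and once to exclude the degenerate configuration $g(x,y) = \alpha y + \beta$. A variant closer in spirit to \cite{Murphy2015} would instead consider the dual line arrangement $\ell_{a,b}\colon y = g(a,b) + bt$, whose pairwise intersections have $t$-coordinates ranging in $X$, and apply Szemer\'edi--Trotter together with a Cauchy--Schwarz argument to reach the same conclusion.
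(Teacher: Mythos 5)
Your proposal is correct, and it takes a genuinely different route from the paper. The paper works in the dual setting: it associates to each pair $(a,b)\in A\times A$ the line $\ell_{a,b}\colon y=bx-g(a,b)$ (the arrangement you mention only in your closing sentence), observes that the $\gg|A|^4$ pairwise intersections of non-parallel lines all lie on the $|X|$ vertical lines $x=\xi$ with $\xi\in\pm X$, and bounds the resulting second moment $\sum_{x\in X}\sum_y n^2(x,y)\ll |A|^3|X|^{1/2}$ via the Szemer\'edi--Trotter rich-point estimate and a dyadic decomposition (Lemma \ref{lemma:energy}). You instead pass to the primal point set $P=\{(b,g(a,b))\}$, whose non-vertical direction set is exactly $-X$, and invoke Ungar's theorem. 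Both arguments use the non-divisibility hypothesis for the same purpose --- it is equivalent to $g$ depending nontrivially on $x$, which is what gives $|P|\gg_d|A|^2$ in your version and the bounded multiplicity of the lines $\ell_{a,b}$ in the paper's version --- and your two auxiliary checks (the fiber count $|g(A,b)|\ge|A|/d$ for all but at most $d$ values of $b$, and the interpolation argument excluding collinearity) are both sound, with the small-$|A|$ cases absorbed into the $d$-dependent constant. What your approach buys is elementarity and an explicit constant, essentially $|X|\ge |P|-2\ge (|A|-d)|A|/d-2$, in the spirit of the paper's own remark that Ungar's theorem strengthens the $g(x,y)=x$ case; what the paper's approach buys is the energy estimate of Lemma \ref{lemma:energy}, which is of independent use and belongs to a method (incidence bounds plus Cauchy--Schwarz) that transfers to settings where direction theorems like Ungar's are unavailable.
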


Notice that the following example shows that the condition that the denominator cannot be a divisor of the numerator is necessary.

\begin{example}
	Suppose that $g(x,y)=y^2$ and $A =\{1,2,\dots n\}$. Then
	\[
	X=\left\{\frac{b_1^2-b_2^2}{b_2-b_1}:\, b_1,b_2 \in A \right\} 
	\]
	equals $- \left\{ b_2+b_1 : b_i \in A \right\}$ and has cardinality $O(n)$.
\end{example}

Furthermore, notice that the condition rules out a degenerate case where the polynomial $g(x,y)$ does not depend on $x$.

On the other hand, it is known that for some polynomials $g$, the result of Theorem~\ref{main} is tight. For example, if we define $g(x_1,y_1)=x_1$ then Theorem \ref{main}  recovers the result of  \cite{Murphy2015} and \cite{Ungar1982}. This is known to be tight, since for the set $A=\{1,\dots,N\}$,
\[ \left | \frac{A-A}{A-A} \right | = O(N^2) . \]
However, we are not aware of any other polynomials $g$ for which the bound in Theorem \ref{main} is tight, and whether or not the bound can be improved for some particular $g$ is an interesting question.

Our main result has some similarities with a result of Raz, Sharir and Solymosi \cite{Raz2015} concerning the growth of two variable polynomials. Their result states that, if $F$ is a two variable polynomial with bounded degree, then for any $A, B \subset \mathbb R$ with $|A|=|B|=n$,
\[ |F(A,B)| \gg_d n^{4/3},\]
provided that $F$ satisfies a non-degeneracy condition. This condition states that $F$ cannot be of one of the following forms
\begin{enumerate}
	\item $F(u,v)=f(g(u)+h(v))$,
	\item$ F(u,v)=f(g(u)\cdot h(v))$.
\end{enumerate}
This result gave an improvement upon an earlier result of Elekes and Ronyai \cite{Elekes2000}.

\subsection{The Szemer\'{e}di-Trotter Theorem}

The essential ingredient used to prove our result is a corollary of the \emph{Szemer\'edi-Trotter} Theorem \cite{Szemeredi1983}, which gives a bound for the number of lines in the plane containing at least a fixed number of points $k$ from a given finite set, that is, the number of \textit{k-rich} lines.
\begin{theorem}[Szemer\'edi-Trotter]
	{\label{szT}}
	Suppose that $P$ is a set of $n$ points and $\mathcal{L}$ is a set of $m$ lines in $\mathbb R^2$. Then
	\begin{equation}{\label{szt2}}
	\mathcal{I}(P,\mathcal{L}) \ll n^{\frac{2}{3}}m^{\frac{2}{3}}+n+m.
	\end{equation}
\end{theorem}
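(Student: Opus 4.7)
The plan is to follow Sz\'ekely's proof, which reduces the incidence bound to the crossing number inequality: for any simple graph $G$ with $n$ vertices and $E \geq 4n$ edges drawn in the plane, the number of crossings satisfies $\mathrm{cr}(G) \gg E^3/n^2$. I would take this inequality as a black box; its standard derivation combines Euler's formula bound $\mathrm{cr}(G) \geq E - 3n$ (valid for any simple graph with at least three vertices) with a probabilistic vertex-sampling argument.

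First I would pass to the subcollection $\mathcal{L}' \subset \mathcal{L}$ of lines containing at least two points of $P$. Lines outside $\mathcal{L}'$ contribute at most $m$ incidences in total, so it suffices to control $\mathcal{I}(P, \mathcal{L}')$. From $\mathcal{L}'$ and $P$, I would construct a drawing of a graph $G$ as follows: the vertex set is $P$, and for each line $\ell \in \mathcal{L}'$ containing $k_\ell \geq 2$ points of $P$, I join these points consecutively by $k_\ell - 1$ edges drawn along $\ell$. Because two distinct points lie on a unique common line, no two edges share both endpoints, so $G$ is simple. The total edge count is
\[ E = \sum_{\ell \in \mathcal{L}'} (k_\ell - 1) = \mathcal{I}(P, \mathcal{L}') - |\mathcal{L}'| \geq \mathcal{I}(P, \mathcal{L}) - 2m. \]

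The crucial observation is that in this drawing two edges cross only at an intersection of their underlying lines, and any two distinct lines meet in at most one point, so $\mathrm{cr}(G) \leq \binom{m}{2}$. In the regime $E \geq 4n$, the crossing number inequality then forces $E^3/n^2 \ll m^2$, i.e.\ $E \ll n^{2/3} m^{2/3}$; in the complementary regime one trivially has $E < 4n$. Either way, combining with $\mathcal{I}(P,\mathcal{L}) \leq E + 2m$ yields $\mathcal{I}(P,\mathcal{L}) \ll n^{2/3} m^{2/3} + n + m$, as required. The main obstacle is really just invoking the crossing number inequality; the remaining steps are the bookkeeping that turns incidences into edges and the elementary fact that two distinct lines intersect at most once.
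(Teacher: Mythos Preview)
Your argument is correct: this is precisely Sz\'ekely's crossing-number proof of the Szemer\'edi--Trotter theorem, and all the bookkeeping (restricting to lines with at least two points, the edge count $E \ge \mathcal{I}(P,\mathcal{L}) - 2m$, the crossing bound $\mathrm{cr}(G) \le \binom{m}{2}$, and the case split on $E \ge 4n$) is carried out accurately.

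Note, however, that the paper does not supply its own proof of this statement: Theorem~\ref{szT} is quoted as a known result with a citation to \cite{Szemeredi1983}, and the paper only \emph{uses} it (via Corollary~\ref{k rich line}) rather than proving it. So there is nothing in the paper to compare your approach against; you have simply filled in a proof the paper deliberately omitted as background.
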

\begin{corollary}{\label{k rich line}}
	Let $k, n\ge 2$ be natural numbers and fix $d\in \N$ such that $8d \leq k \leq d \sqrt{n}$. Let $\mathcal{L}$ be a set of $n$ lines in the plane, and let $t_{\ge k}$ denote the number of points in the plane contained in at least $k$ elements of $\mathcal{L}$, where each line appears with multiplicity at most $d$. Then 
	\[
	t_{\ge k}= O_d\left(\frac{n^2}{k^3}\right) .
	\]
\end{corollary}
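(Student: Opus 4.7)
The plan is to reduce this to the standard Szemer\'edi-Trotter incidence bound (Theorem \ref{szT}) by passing to the underlying set of distinct lines. Write $\mathcal{L}'$ for the set of distinct lines appearing in $\mathcal{L}$, and set $n' := |\mathcal{L}'| \leq n$. Because each distinct line has multiplicity at most $d$ in $\mathcal{L}$, any point lying on at least $k$ elements of $\mathcal{L}$ (counted with multiplicity) must lie on at least $k' := \lceil k/d \rceil$ elements of $\mathcal{L}'$. Hence $t_{\geq k}(\mathcal{L}) \leq t_{\geq k'}(\mathcal{L}')$, and the hypothesis $k \geq 8d$ guarantees $k' \geq 8$, which places us in the ``large $k'$'' regime where Szemer\'edi-Trotter is effective.

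Next, letting $P$ be the set of these $t := t_{\geq k'}(\mathcal{L}')$ points, I would double-count $\mathcal{I}(P, \mathcal{L}')$. From below, $\mathcal{I}(P, \mathcal{L}') \geq t k'$ by definition, and from above Theorem \ref{szT} gives $\mathcal{I}(P, \mathcal{L}') \ll t^{2/3} n'^{2/3} + t + n'$. Combining,
\[ t k' \ll t^{2/3} n'^{2/3} + t + n'. \]
Since $k' \geq 8$ is at least twice the implied constant in Theorem \ref{szT}, the additive term $t$ on the right is absorbed into $tk'$ on the left, leaving the usual dichotomy $t \ll n'^2/k'^3$ or $t \ll n'/k'$. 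Substituting $n' \leq n$ and $k' \geq k/d$ yields $t \ll d^3 n^2/k^3$ or $t \ll dn/k$, respectively. The upper hypothesis $k \leq d\sqrt{n}$ rearranges to $k^2 \leq d^2 n$, which forces $dn/k \leq d^3 n^2/k^3$, so in either case $t = O_d(n^2/k^3)$.

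The main (and really only) obstacle is cosmetic: one has to verify that the explicit threshold ``$8d$'' is compatible with the implied constant in Theorem \ref{szT} so that the $+t$ term can indeed be absorbed. If the Szemer\'edi-Trotter constant turned out to exceed $4$, one would replace $8d$ by a suitably larger multiple of $d$; the structure of the argument, and the final $O_d$ dependence, would be unaffected. I expect no genuine difficulty beyond this constant bookkeeping, since the reduction ``distinct lines, then apply Szemer\'edi-Trotter'' is entirely standard.
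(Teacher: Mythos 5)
Your argument is correct and is exactly the standard derivation one would expect here; the paper in fact states this corollary without proof, so there is no competing argument to compare against. The one loose end you flag --- whether $k'=\lceil k/d\rceil\ge 8$ suffices to absorb the additive $+t$ term --- can be closed without enlarging the threshold $8d$: in the complementary regime where $k'$ is at most twice the Szemer\'edi--Trotter constant, $k'$ is bounded, and since every point of $P$ lies on at least two distinct lines one trivially has $t\le\binom{n'}{2}\ll n'^2=O\bigl(n'^2/k'^3\bigr)$, so the stated bound holds in all cases.
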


\section{Main Results}

Suppose that $A,B \subset \mathbb{R}$ are finite, and $g(x_1,y_1)$ is a polynomial of degree $d$. We associate an element of $A \times B$ with a line via 
\[ 
 A \times B \ni (a,b) \; \longleftrightarrow \; l_{a,b} : y = bx - g(a,b).
\]
 Consider~$\mathcal{L}=\{\ell_{a,b} : a,b \in A \times B\}$ as a multi-set. Then~$\mathcal{L}$ is a set of~$|A||B|$ lines, such that each line appears at most~$d$ times. We also define the quantity 
 \[
 n(x,y) = \bigl|\bigl\{(a,b) \in A \times B \, : \, (x,y) \in l_{a,b} \bigr\}\bigr|,
 \] which is interpreted geometrically as the number of lines of $\mathcal{L}$ that pass through~$(x,y)$. 

\begin{lemma}{\label{lemma:energy}}
Suppose that $d \in \N$ is fixed. Suppose that $A,B,X \subset \mathbb{R}$ are finite and satisfy $|X| \le \frac{\lvert A \lvert\lvert B\lvert}{4d^2}$, with $0\notin X$. Then 
\begin{equation}{\label{eq:mine}}
\sum_{x\in X} \sum_{y} n^2(x,y) \ll |A|^{\frac{3}{2}}|B|^{\frac{3}{2}}|X|^{\frac{1}{2}}.
\end{equation}
\end{lemma}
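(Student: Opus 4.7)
The sum $E := \sum_{x \in X}\sum_y n^2(x,y)$ is the second moment of the incidence count between the multi-set $\mathcal{L}$ and the set of points whose first coordinate lies in $X$; equivalently, it counts $5$-tuples $(a,b,a',b',x) \in (A\times B)^2 \times X$ with $bx - g(a,b) = b'x - g(a',b')$. My plan is the standard incidence-theoretic dyadic decomposition, combining a trivial per-vertical-line bound with the rich-lines Corollary~\ref{k rich line}.

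Set $P_k := \{(x,y) : x \in X,\ 2^k \le n(x,y) < 2^{k+1}\}$, so $E \le 4\sum_{k \ge 0} 4^k|P_k|$. Since each line of $\mathcal{L}$ meets the vertical line $\{x\}\times \mathbb{R}$ in a unique point, $\sum_y n(x,y) = |A||B|$ for every $x$, yielding the trivial bound $|P_k| \le |A||B||X|/2^k$. On the other hand, Corollary~\ref{k rich line} applied to $\mathcal{L}$ (of size $|A||B|$, with multiplicity $\le d$) gives the rich-lines bound $|P_k| \le t_{\ge 2^k} \ll_d (|A||B|)^2/2^{3k}$, valid in the range $8d \le 2^k \le d\sqrt{|A||B|}$. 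These two bounds cross at the threshold $\tau := (|A||B|/|X|)^{1/2}$, and the hypothesis $|X| \le |A||B|/(4d^2)$ forces $\tau \ge 2d$, so $\tau$ lies inside the Szemer\'edi--Trotter range up to an absolute constant. Splitting the dyadic sum at $\tau$, the trivial bound handles $2^k \le \tau$ via a geometric series dominated by its top,
\[
\sum_{2^k \le \tau} 4^k \cdot \frac{|A||B||X|}{2^k} \ll \tau\,|A||B||X| = (|A||B|)^{3/2}|X|^{1/2},
\]
while the rich-lines bound handles $2^k > \tau$ via a geometric series dominated by its bottom,
\[
\sum_{2^k > \tau} 4^k \cdot \frac{(|A||B|)^2}{2^{3k}} \ll \frac{(|A||B|)^2}{\tau} = (|A||B|)^{3/2}|X|^{1/2},
\]
which together give $E \ll_d |A|^{3/2}|B|^{3/2}|X|^{1/2}$.

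The main obstacle is careful bookkeeping at the low-$k$ endpoint: Corollary~\ref{k rich line} activates only once $2^k \ge 8d$, whereas the hypothesis guarantees only $\tau \ge 2d$. The narrow window $2d \le 2^k < 8d$ must be covered by the trivial bound, producing a contribution of order $d\,|A||B||X|$; using $|X|^{1/2} \le (|A||B|)^{1/2}/(2d)$, which is exactly what the hypothesis gives, this is absorbed into the target bound up to a $d$-dependent constant. The assumption $0 \notin X$ plays a supporting role: at $x = 0$ the lines $\ell_{a,b}$ can accumulate at a single intercept (if $g(a,b)$ fails to separate pairs across several slopes), producing a point whose multiplicity exceeds $d\sqrt{|A||B|}$ and on which Corollary~\ref{k rich line} is silent; excluding $x = 0$ from $X$ keeps the relevant multiplicities inside the corollary's effective range.
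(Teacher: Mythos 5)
Your proposal is correct and follows essentially the same route as the paper: a trivial bound via $\sum_y n(x,y)=|A||B|$ for multiplicities below the threshold $(|A||B|/|X|)^{1/2}$, and the Szemer\'edi--Trotter rich-points bound (Corollary~\ref{k rich line}) summed dyadically above it. Your explicit treatment of the window $2d\le 2^k<8d$, where the corollary's hypothesis $k\ge 8d$ is not yet available, is a small point of extra care that the paper's choice $\Delta>2d$ glosses over.
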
 

\begin{proof}
The amount of $t$-rich points is given by $$R_t:=\left\{ (x,y) \in \mathbb R^2: n(x,y) \ge t \right\}.$$ We first show that 
\[
|R_t| \ll \frac{|A|^2|B|^2}{t^3}.
\]
First, we bound $n(x,y)$ for a given point $(x,y)$. For fixed $b_0\in B$ we obtain a line with slope $b_0$ passing through $(x,y)$ and a one variable polynomial equation $g(a,b_0)$. Since each line is determined uniquely, by its slope and one point on it (for fixed $b_o$ and $(x,y)$ the equation $g(a,b_0) = 0$ has at most $d$ distinct solutions),  we have
\[
n(x,y) \le d\lvert B\lvert.
\]
With a similar argument for fixed $a \in A$ we obtain a univariate polynomial equation. Since each line is determined uniquely by its $y$-intercept and one point on it we have:
\[
n(x,y) \le d\lvert A \lvert.
\]
These together imply:
\[
n(x,y) \le d(\text{min}\{|A|,|B|\}) \le (d\lvert A\lvert d\lvert B\lvert )^{\frac{1}{2}}=d\lvert\mathcal{L}\lvert^{\frac{1}{2}}.
\]
This implies there are no points incident to more than $d\sqrt{|\mathcal{L}|}$ lines in $\mathcal{L}$, and by applying Corollary \ref{k rich line} we get:
\[
|R_t| \ll \frac{|\mathcal{L}|^2}{t^3} \le \frac{|A|^2|B|^2}{t^3}.
\]
Let $\Delta > 2d$ be an integer to be specified later. We have
\begin{equation}{\label{eq:mine1}}
\sum_{x \in X} \sum_y n^2(x,y) \le \sum_{x \in X}\sum_{n(x,y)\le \Delta}n^2(x,y)+\sum_{\substack{(x,y ) \\ n(x,y) > \Delta}}n^2(x,y).
\end{equation}
The first term is bounded by $\Delta |A||B||X|$, in fact
\begin{equation}{\label{eq:mine2}}
\sum_{x \in X}\sum_{n(x,y)\le \Delta}n^2(x,y) \le \Delta\sum_{x \in X}\sum_y n(x,y)=\Delta|A||B|\sum_{x \in X}1=\Delta|A||B||X|.
\end{equation}

For second term we have: 
\begin{equation}{\label{eq:mine3}}
\begin{aligned}
\sum_{\substack{(x,y ) \\ n(x,y) > \Delta}}n^2(x,y) &= \sum_{j \ge 1} \;\sum_{2^{j-1}\Delta\le n(x,y) \le 2^j \Delta } n^2(x,y) \ll \\ &\ll \sum_{j \ge 1}\frac{|A|^2|B|^2}{(2^j\Delta )^3} \cdot (2^j \Delta)^2=\frac{|A|^2|B|^2}{\Delta}\sum_{j \ge 1} \frac{1}{2^j}=\frac{|A|^2|B|^2}{\Delta}.
\end{aligned}
\end{equation}
For an optimal choice, set the parameter $\Delta= \Big\lceil \frac{(|A\|B|)^{1/2}}{|X|^{1/2}}\Big\rceil > 2d$. Combining the bounds from \eqref{eq:mine1} and \eqref{eq:mine2} and \eqref{eq:mine3}, it follows that
\[
\sum_{x} \sum_{y} n^2(x,y) \ll |A|^{\frac{3}{2}}|B|^{\frac{3}{2}}|X|^{\frac{1}{2}} \, . \qedhere
\]
\end{proof}

\textit{Proof of Theorem \ref{main}}. Consider:
\begin{multline*}
|X| = \left|\left\{(x,a_1,a_2,b_1,b_2): x=\frac{g(a_1,b_1)-g(a_2,b_2)}{b_1-b_2} , a_i,b_i \in A\right\}\right| \\[2ex]
=\bigl|\bigl\{(x,a_1,a_2,b_1,b_2) \, : \, b_1x-g(a_1,b_1)=b_2x-g(a_2,b_2) \bigr\}\bigr|= \\[2ex]
\sum_{x} \sum_{y} n^2(x,y) \ll |A|^3|X|^{\frac{1}{2}} \, .
\end{multline*}

On the other hand, $|X| \ge |A|^4$. Hence we obtain: 
\begin{center}
 \hfill $|A|^4 \ll |A|^3|X|^{\frac{1}{2}}, \quad \text{hence} \quad |X| \gg |A|^2 \,.$  \hfill$\square$
\end{center}

\begin{corollary}
Suppose that $P= A \times A$ is a set of $\lvert A \lvert^2$ points. Let $l$ be the $y$-axis. Suppose that $B(P)$ is the set of all bisectors determined by $P$. Then $\lvert B \cap l \lvert \gg \lvert A \lvert^2$. 
\end{corollary}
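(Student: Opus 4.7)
The plan is to realize the $y$-intercepts of perpendicular bisectors of pairs of points from $P$ as instances of the set $X$ from Theorem~\ref{main} for a specific choice of $g$, and then invoke the theorem directly.

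First I would compute the perpendicular bisector of two points $(a_1,b_1),(a_2,b_2)\in A\times A$. Setting distances equal and expanding yields the equation
\[
2(a_2-a_1)x+2(b_2-b_1)y=(a_2^2-a_1^2)+(b_2^2-b_1^2).
\]
Setting $x=0$ (intersecting with the $y$-axis $\ell$) and assuming $b_1\neq b_2$, this gives the intercept
\[
y=\frac{(a_2^2-a_1^2)+(b_2^2-b_1^2)}{2(b_2-b_1)}.
\]

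Next I would introduce the polynomial $g(x,y)=\tfrac12(x^2+y^2)$, which has degree $d=2$. A direct computation shows
\[
\frac{g(a_1,b_1)-g(a_2,b_2)}{b_2-b_1}=-\,\frac{(a_2^2-a_1^2)+(b_2^2-b_1^2)}{2(b_2-b_1)},
\]
so the $y$-intercept above is exactly the negative of a generic element of the set $X$ associated to $g$ in Theorem~\ref{main}. In particular, the map $x\mapsto -x$ is a bijection between $X$ and the set of $y$-intercepts of bisectors of pairs with $b_1\neq b_2$, so $|B(P)\cap\ell|\ge|X|$.

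It remains to verify the non-divisibility hypothesis of Theorem~\ref{main} for this $g$. As a polynomial in $\mathbb{R}[a_1,a_2,b_1,b_2]$,
\[
g(a_1,b_1)-g(a_2,b_2)=\tfrac12(a_1^2-a_2^2)+\tfrac12(b_1^2-b_2^2).
\]
The second summand is divisible by $b_2-b_1$, but the first summand is a nonzero polynomial in $a_1,a_2$ alone and therefore not divisible by $b_2-b_1$; hence the sum is not divisible by $b_2-b_1$. This is the only condition that needs checking, and it is the least routine part of the argument (a quick polynomial-factorization sanity check rather than a real obstacle). With the hypothesis verified, Theorem~\ref{main} applied to this $g$ and to the set $A$ yields $|X|\gg |A|^2$, and combining with the inequality above gives $|B(P)\cap\ell|\gg|A|^2$, as required. \qed
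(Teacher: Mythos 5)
Your proof is correct and follows essentially the same route as the paper: compute the $y$-axis intercept of the perpendicular bisector, recognize it (up to sign) as an element of the set $X$ for a quadratic choice of $g$, and invoke Theorem~\ref{main}. If anything you are more careful than the paper, which takes $g(x,y)=-2(x^2-y^2)$ (whose sign on the $y^2$ term does not exactly reproduce the intercept, though this is immaterial modulo the part divisible by $y_2-y_1$) and does not explicitly verify the non-divisibility hypothesis as you do.
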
 
\begin{proof}
By a simple calculation we can see that the equation of the bisector determined by two points $(x_1,y_1)$ and $(x_2,y_2)$ in the $s,t$ plane is:
\[
s=\frac{2(x_1-x_2)t+(x_2^2-x_1^2)+(y_2^2-y_1^2)}{2(y_2-y_1)}.
\] 
Inserting $t=0$, the hitting point has coordinate
\[
\left(0,\frac{(x_2^2-x_1^2)+(y_2^2-y_1^2)}{2(y_2-y_1)}\right).
\]
Setting $g(x,y)=-2(x^2-y^2)$, we obtain the result by Theorem \ref{main}.  
\end{proof}
As we mentioned, this bound is tight for some polynomials, for instance $g(x,y)=x$. However, we expect that if $F(x_1,x_2,y_1,y_2)$ is a generic rational function satisfying the condition of the Theorem \ref{main} we have $\lvert X \lvert \gg \lvert A \lvert^3$.

\bigskip
\bigskip
\noindent {\Large \textbf{Acknowledgements}}.
I would like to thank Oliver Roche-Newton for bringing this problem to my attention and for several helpful conversations.

\providecommand{\bysame}{\leavevmode\hbox to3em{\hrulefill}\thinspace}
\providecommand{\MR}{\relax\ifhmode\unskip\space\fi MR }
\providecommand{\MRhref}[2]{%
	\href{http://www.ams.org/mathscinet-getitem?mr=#1}{#2}
}
\providecommand{\href}[2]{#2}

\end{document}